\title{\LARGE \bf
Coherence in Synchronizing Power Networks \\ with Distributed Integral Control
} 
\author{Martin Andreasson, Emma Tegling, Henrik Sandberg and Karl H. Johansson%
\thanks{M. Andreasson, deceased, was with the School of Electrical Engineering and the ACCESS Linnaeus Centre, KTH Royal Institute of Technology, Stockholm, Sweden. He sadly passed away after the completion of this work.
 E. Tegling, H. Sandberg and K.H. Johansson are with the same organization.
Corresponding author: E. Tegling {(\tt\small tegling@kth.se)}}%
\thanks{This work was supported in part by the European Commission, the Knut and Alice Wallenberg Foundation, by the Swedish Research Council through grants 2014-6282 and 2013-5523, and by the Swedish Foundation for Strategic Research.}  }
\newtheorem{theorem}{Theorem}
\newtheorem{assumption}{Assumption}
\DeclareMathOperator*{\diag}{diag}
\DeclareMathOperator*{\E}{\mathbb{E}}
\newcommand{\beq}{\begin{equation}}
\newcommand{\eeq}{\end{equation}}
\newcommand{\bq}{\begin{eqnarray}}
\newcommand{\eq}{\end{eqnarray}}
\newcommand{\bqn}{\begin{eqnarray*}}
\newcommand{\eqn}{\end{eqnarray*}}
\newcommand{\bee}{\begin{enumerate}}
\newcommand{\eee}{\end{enumerate}}
\newcommand{\hn}{$\mathcal{H}_2$ }
\newcommand{\henrik}[1]{\ifthenelse{\boolean{showcomments}}
{\textcolor{blue}{Henrik says: #1}}{}}
\newcommand{\emma}[1]{\ifthenelse{\boolean{showcomments}}
{\textcolor{red}{Emma says: #1}}{}}
\newcommand{\martin}[1]{\ifthenelse{\boolean{showcomments}}
{\textcolor{cyan}{Martin says: #1}}{}}
\renewcommand{\p@subfigure}{}
\newlength\fheight
\newlength\fwidth
\begin{document}
\maketitle

\begin{abstract}
We consider frequency control of synchronous generator networks and study transient performance under both primary and secondary frequency control. We model random step changes in power loads and evaluate performance in terms of expected deviations from a synchronous frequency over the synchronization transient; what can be thought of as \emph{lack of frequency coherence}. We compare a standard droop control strategy to two secondary proportional integral~(PI) controllers: centralized averaging PI control (CAPI) and distributed averaging PI control (DAPI). We show that the performance of a power system with DAPI control is always superior to that of a CAPI controlled system, which in turn has the same transient performance as standard droop control. Furthermore, for a large class of network graphs, performance scales unfavorably with network size with CAPI and droop control, which is not the case with DAPI control. We discuss optimal tuning of the DAPI controller and describe how inter-nodal alignment of the integral states affects performance. Our results are demonstrated through simulations of the Nordic power grid. 
\end{abstract}

\section{Introduction}
Maintaining the frequency close to its nominal value is one of the key control objectives in alternating current~(AC) power systems. This is traditionally achieved through a hierarchy of control actions, starting with decentralized primary control (droop control), via the secondary control layer with the automatic generation control (AGC) to the tertiary control layer, where an optimal economic dispatch of generators takes place~\cite{machowski2008power}. These control layers operate at different time scales and, traditionally, with different degrees of centralization. In light of ongoing developments in power systems, in particular large-scale integration of distributed generation sources, recent years have seen an increased interest for more flexible, distributed schemes for optimal frequency control. 

On the one hand, 
this problem has been addressed through online optimization techniques that exploit the frequency dynamics of the power network, see e.g. \cite{ Zhao2014Design, Mallada2014Optimal,Zhang2015realtime, Li2014Connecting}. These approaches include load-side frequency control~\cite{Zhao2014Design, Mallada2014Optimal}, as well as adaptations of the AGC to incorporate the optimal dispatch problem \cite{Li2014Connecting}, and are typically based on primal-dual gradient methods. 
Another line of research has studied various integral control strategies, which address the secondary control problem, that is, to adjust the steady-state frequency reached through droop control to the desired setpoint ~\cite{Shafiee2012, SimpsonPorco2013synchronization, Andreasson2013_ecc,  andreasson2014_acc, Dorflerhierarchy2015, Trip2016Internal}. Among the proposed controllers are fully decentralized proportional integral (PI) controllers, which have been shown to suffer from poor robustness properties and typically lead to sub-optimal power injections~\cite{andreasson2014_acc, Dorflerhierarchy2015}. When complemented by a distributed averaging of the integral state, however, these controllers eliminate frequency errors while maintaining optimality properties, in what is often referred to as power sharing 
\cite{SimpsonPorco2013synchronization, Andreasson2013_ecc}. 

In this paper, we consider one such distributed averaging PI (DAPI) controller, and compare it to a centralized averaging PI (CAPI) controller as well as to standard droop control, with respect to their control performance. Specifically, we study a network of synchronous generators modeled by coupled swing equations, and evaluate the systems' transient performance under these control laws, after being subjected to random step disturbances in the power loads. We study performance in terms of expected deviations from a synchronous frequency over the transient, or in other words, the \textit{level of coherence} in the network frequency. 

The concept of coherence has been widely used to study performance of networked control systems, in particular, in order to point at fundamental limitations in large-scale networks~\cite{bamieh2012coherence, SiamiMotee2016}. In the context of power networks, related metrics have been used to characterize transient performance in terms of losses due to non-equilibrium power flows~\cite{Bamieh2013Price, Tegling2015Price, Tegling2016_ACC, Grunberg2016}.
These metrics have also served as performance criteria in inertia-allocation problems~\cite{Poolla2016ACC} and have been used to evaluate the efficiency of primal-dual algorithms for optimal frequency control~\cite{Simpson2016CDC}. In all of these works, the performance metric can be cast as an \hn norm of an input-output system where the output typically captures phase angle deviations. 

The performance metric we propose in this paper differs subtly, but in important respects, from these previous works. Firstly, it captures deviations in frequency, rather than phase angles, and therefore represents a more direct notion of control performance and coherence, given the control objective of maintaining a uniform frequency over the power network.  Secondly, we model step changes in loads, as opposed to impulses or white noise, and the proposed metric can therefore not be directly formulated as an \hn norm. 

Our main result shows, to begin with, that CAPI control has the same transient performance as primary droop control. This can be understood by the fact that the secondary control input in CAPI is equal at all nodes, and therefore, while it shifts the equilibrium, it does not affect how fast it is reached. Furthermore, and more importantly, we show that the performance of DAPI control is always superior to that of CAPI and droop control. In particular, and in contrast to the results in~\cite{Tegling2015Price,Tegling2016_ACC}, their respective control performance scales differently with network size. For instance, in sparse network topologies, this may imply an unbounded growth of expected frequency deviations with CAPI or droop control, while they remain bounded for any number of generators with DAPI control.

The DAPI controller is distinguished from the decentralized PI-controller through the distributed averaging of the integral state, which improves robustness and power sharing properties. In this paper, we discuss optimal tuning of this distributed averaging with respect to performance, and extend a related analysis carried out in~\cite{Tegling2016_ACC}. It turns out that, for highly damped systems, the theoretical performance is optimized by setting the distributed averaging gain to zero, leading to decentralized PI control. For other cases, we show that this optimal gain is relatively small. This indicates a trade-off between transient performance and robustness to measurement bias, which will be explored in future work.

The remainder of this paper is organized as follows. We introduce the problem setup and performance metric in Section~\ref{sec:setup}. In Section~\ref{sec:perfeval}, we calculate the performance metric for the different controllers, and in Section~\ref{sec:optimal_DAPI}, we discuss optimal tuning of the DAPI controller. We present a numerical simulation in Section~\ref{sec:Simulations} and conclude in Section~\ref{sec:disc}.

\section{Problem setup}
\label{sec:setup}
\subsection{Definitions}
\label{subsec:def}
Let $I_n$ denote the identity matrix of dimension $n$. A column vector of dimension $n$ whose elements are all equal to $c$ is denoted $c_n$, and an $n$ by $m$ matrix whose elements are all equal to $c$ is denoted $c_{n\times m}$. Denote by $J_n=\frac{1}{n} 1_{n\times n}$.

\subsection{Droop controlled power system}
\label{subsec:Problem_Droop}
Consider a network of $n$ synchronous generators modeled by a graph $\mathcal{G}=(\mathcal{V}, \mathcal{E})$, where $\mathcal{V} = \{1, \dots, n\}$ is the set of generators, and $\mathcal{E} \subset \mathcal{V} \times \mathcal{V}$ is the set of network lines. We here assume a Kron-reduced network model (see, e.g. \cite{Varaiya1985}), in which constant-impedance loads have been absorbed into the line models in $\mathcal{E}$. The neighbor set of generator~$i$ in $\mathcal{G}$ is denoted~$\mathcal{N}_i$.

Each generator $i$, here referred to as a bus, is assumed to obey the swing equation, as described in \cite{machowski2008power}
\begin{align}
 \label{eq:swing_2}
m_i\ddot{\delta}_i + \! d_i  ( \dot{\delta}_i \! - \!  \omega^\text{ref} )\!   =  \! - \!\! \sum_{j\in \mathcal{N}_i}\!\! k_{ij}\sin(\delta_i \! - \! \delta_j) + \! P^m_i + \! u_i,
\end{align} 
where $\delta_i$ is the voltage phase angle and $\dot{\delta}_i - \omega^\text{ref}=: \omega_i$ is the frequency deviation at bus~$i$, in which $\omega^\text{ref}$ is the nominal frequency (typically 50 Hz or 60 Hz).
The constants $m_i$ and $d_i$ are, respectively, the inertia and damping (droop) coefficients, and $P_i^m$ is the power load at bus~$i$. We denote by $u_i$ the input from the secondary controller, which, for the droop-controlled system is not present, so $u_i^\mathrm{droop}=0, ~\forall t\ge 0, ~i\in \mathcal{V}$. The constants $k_{ij} = |V_i||V_j|b_{ij}$ are edge weights, where $|V_i|$ is the voltage magnitude at bus~$i$, and $b_{ij}$ is the susceptance of the line $(i,j)$.
By linearizing~\eqref{eq:swing_2} around the equilibrium where $\delta_i=\delta_j \;\forall i,j \in \mathcal{V}$, and making use of the shifted frequency~$\omega_i$, we write the linearized swing equation as
\begin{eqnarray}
 \label{eq:swing_lin_2}
m_i\dot{\omega}_i + d_i {\omega}_i = - \!\!\sum_{j\in \mathcal{N}_i} k_{ij}(\delta_i - \delta_j) + P^m_i + u_i.
\end{eqnarray} 
 By defining $\delta = [
\delta_1 \hdots , \delta_n
]^T$, and $\omega = [\omega_1, \dots, \omega_n]^T
$, we may rewrite \eqref{eq:swing_lin_2} in vector form:
\begin{align}
\label{eq:swing_vector_2}
\tag{$\mathcal{S}_\text{droop}$}
\begin{bmatrix}
\dot{\delta} \\ M \dot{\omega}
\end{bmatrix} = \begin{bmatrix}
\phantom{-} 0_{n\times n} & \phantom{-} I_{n} \\
- \mathcal{L}_k & -  D
\end{bmatrix} 
\begin{bmatrix}
{\delta} \\ \omega
\end{bmatrix} +
\begin{bmatrix}
0_{n} \\  P^m
\end{bmatrix} +
\begin{bmatrix}
0_{n} \\  u
\end{bmatrix}
\end{align}
where $M=\diag({m_1}, \hdots , {m_n})$, $D=\diag(d_1, \hdots, d_n)$, and $\mathcal{L}_k$ is the weighted Laplacian matrix whose elements $[\mathcal{L}_k]_{ij} = \sum_{ l \in \mathcal{N}_i} k_{il}  $ if $i = j$ and $-k_{ij}$ if $i \ne j$.
%
The vector of loads is denoted 
 $P^m=[
P^m_1,\hdots, P^m_n
]^T$, and  $u=[
u_i,\hdots, u_n
]^T$ contains the secondary controller inputs. Note that $u = 0_n$ if no secondary controller is present. 
\subsection{CAPI controller}
\label{subsec:Problem_CAPI}
The droop-controlled power system \eqref{eq:swing_vector_2} can be shown to return to a stable operating point. However, since droop control is effectively a proportional control law, the equilibrium will in general \emph{not} be the desired operating point $\omega=0_n$. This can be overcome by designing the secondary control input $u$ to make $\omega=0_n$ the only stable equilibrium of \eqref{eq:swing_vector_2}. 
That can be achieved by adding an averaging integral control term, resulting in a \emph{centralized averaging proportional integral (CAPI)} controller, which was proposed in \cite{Andreasson2014TAC}. 
The CAPI controller requires the control signal $u$ to be computed centrally through integration of the average frequency deviation and, subsequently, to be broadcast to all generators. The controller takes the form:
\begin{align}
\begin{aligned}
u_i^\mathrm{CAPI} &= - z \\
q \dot{z} &= \frac 1n \sum_{i\in\mathcal{V}} \omega_i,
\end{aligned}
\label{eq:power_centralized_integral}
\tag{CAPI}
\end{align}
where $q>0$ is a controller gain. 
By substituting $u_i^{\mathrm{CAPI}}$ for $u_i$ in \eqref{eq:swing_lin_2}, we obtain the following closed-loop dynamics in vector form:
\begin{align}
\label{eq:CAPI_vector}
\tag{$\mathcal{S}_\text{CAPI}$}
\begin{bmatrix}
\dot{\delta} \\ M \dot{\omega} \\ q \dot{z}
\end{bmatrix} = \begin{bmatrix}
\phantom{-} 0_{n\times n} & \phantom{-} I_{n}  & \phantom{-} 0_n\\ 
- \mathcal{L}_k & -  D & -1_n \\
\phantom{-} 0_n^T & \phantom{-} \frac{1}{n} \! 1_n^T &\phantom{-}  0
\end{bmatrix} \!\!
\begin{bmatrix}
{\delta} \\ \omega \\ z
\end{bmatrix} \! + \!
\begin{bmatrix}
0_{n} \\ P^m \\ 0
\end{bmatrix}\!\!. 
\end{align}

We remark that this control design assumes that all generators take equal part in the secondary frequency control. The design can, however, be generalized to arbitrary positive weights for each bus, see~\cite{Dorflerhierarchy2015, DorflerACC2016}.

\subsection{DAPI controller}
\label{subsec:Problem_DAPI}
DAPI controllers have recently been proposed for frequency control of synchronous generator networks as well as for microgrids \cite{SimpsonPorco2013synchronization, Andreasson2013_ecc, Trip2016Internal}. By including a distributed averaging of the integral states instead of centralized averaging, this controller structure eliminates the need for a central control entity and can be implemented in a distributed fashion. 
The DAPI controller takes the form
\begin{equation}
\begin{aligned}
u_i^\mathrm{DAPI} &= - z_i \\
q_i \dot{z}_i &= \omega_i -  \sum_{j \in \mathcal{N}_i^C} c_{ij} (z_i - z_j),
\end{aligned}
\label{eq:power_secondary_DAPI_scalar}
\tag{DAPI}
\end{equation}
where $c_{ij}=c_{ji}>0$ and $q_i>0$ are positive gains and $\mathcal{N}_i^C$ is the set of generators that generator~$i$ can communicate with. The communication graph $\mathcal{G^C} = \{\mathcal{V}, \mathcal{E}^C\}$ is assumed to be undirected and connected, i.e., $j\in \mathcal{N}_i^C$ implies $i\in\mathcal{N}_j^C$ and there exists a path along the communication graph connecting any two generators in $\mathcal{V}$. 

The DAPI controller has been shown to achieve the important property of stationary power sharing, implying that the generated power of all generators is equal at steady state~\cite{SimpsonPorco2013synchronization, Andreasson2013_ecc}. It can also be modified to achieve \emph{weighted} power sharing~\cite{SimpsonPorco2013synchronization}.

By substituting $u_i^{\mathrm{DAPI}}$ for $u_i$ in \eqref{eq:swing_lin_2}, we obtain the closed-loop dynamics in vector form as
\begin{align}
\label{eq:DAPI_vector}
\tag{$\mathcal{S}_\text{DAPI}$}
\begin{bmatrix}
\dot{\delta} \\ M \dot{\omega} \\ Q \dot{z}
\end{bmatrix} = \begin{bmatrix}
\phantom{-} 0_{n \! \times \! n} & \phantom{-} I_{n}  & \phantom{-} 0_{n \! \times \! n}\\ 
- \mathcal{L}_k & - D & - I_n  \\
\phantom{-} 0_{n \! \times \! n} & \phantom{-}  I_n & - \mathcal{L}_c
\end{bmatrix} \!\!
\begin{bmatrix}
{\delta} \\ \omega \\ z
\end{bmatrix} \! + \!
\begin{bmatrix}
0_{n} \\ P^m \\ 0_n
\end{bmatrix} \!\!, 
\end{align}
where $\mathcal{L}_c$ is the weighted graph Laplacian of the communication network, defined in analogy to $\mathcal{L}_k$, and $Q=\diag(q_1, \dots, q_n)$.

\subsection{Decentralized PI controller}
\label{subsec:Problem_DePI}
A special case of the DAPI controller is obtained when $c_{ij}=0 \;\forall (i,j)\in \mathcal{E}$, i.e., there is no averaging of the integral states. In this case the DAPI controller reduces to a \textit{decentralized PI (DePI)} controller. DePI controllers have been proposed for frequency control of power networks \cite{Andreasson2012,Andreasson2014TAC}, however, their implementation requires phasor measurements at every node. Furthermore, the power sharing property that is achieved by the DAPI controller is not in general achievable through DePI control~\cite{andreasson2013lic}. The DePI controller takes the form 
\begin{align}
\begin{aligned}
u_i^{\mathrm{DePI}} &= - z_i \\
q_i \dot{z}_i &= \omega_i, 
\end{aligned}
\label{eq:power_secondary_DePI_scalar}
\tag{DePI}
\end{align}
where $q_i$ are positive controller gains.

\subsection{Performance metric}
We use an integral metric to compare transient performance of the closed-loop systems of Sections \ref{subsec:Problem_Droop}--\ref{subsec:Problem_DAPI}. Let the synchronization performance of an arbitrary system $\mathcal{S}$ be given by 
\begin{align}
\norm{\mathcal{S}}_\text{sync}^2 \! =\E \left\{ \! \int_0^\infty \! \frac 1n \! \sum_{i=1}^n \! \left( \! \omega_i(t) - \frac 1n \! \sum_{j=1}^n \omega_j(t) \! \right)^2 \!\! \text{d} t \! \right\} \!,
\label{eq:performance_metric_scalar}
\end{align}
where the expectation is taken over the power load, which is assumed to be a Gaussian zero-mean random variable with unit covariance, i.e.,  \[P^m  \sim \mathcal{N}(0_n, I_n).\] The initial state is assumed to be the origin. 
The metric~\eqref{eq:performance_metric_scalar} can be interpreted as the expected total deviation from a synchronous frequency, normalized by the number of generators. As such, it characterizes the \emph{level of coherence} in the network frequency over the transient.
This performance metric does not require the synchronous frequency to equal the nominal frequency, or even to be constant. It is therefore well-defined for a large class of frequency controllers. In particular, this metric enables the comparison between the transients of droop-controlled generators and secondary controllers, even though the former will not synchronize at $\omega = 0_n$ in general.
The performance metric~\eqref{eq:performance_metric_scalar} can be written in vector-form as
\begin{align}
\norm{\mathcal{S}}_\text{sync}^2  =\E \left\{ \int_0^\infty  y^T(t) y(t) \text{d} t \right\},
\label{eq:performance_metric_vector}
\end{align}
where 
\begin{align}
\label{eq:generic_output}
y(t)= \frac{1}{\sqrt{n}}(I_n-J_n)\omega(t).
\end{align}
We will refer to the metric~\eqref{eq:performance_metric_scalar} as a \emph{synchronization norm} and note that it is related to, but distinct from a standard \hn norm metric. Specifically, the integral~\eqref{eq:performance_metric_scalar} would correspond to an \hn norm of a system with output~\eqref{eq:generic_output} if the input were either white process noise or impulses at time $t = 0$, see~\cite{Tegling2015Price} for details. We argue that the random step disturbance in the load $P_m$ that we consider here is a more relevant input  in the power system context. We remark, however, that this performance analysis will characterize the system performance under ``normal'' operation, i.e. small step disturbances, 
but does not necessarily give insights to the system's robustness towards extreme transient stability events.

The proposed metric~\eqref{eq:performance_metric_scalar} is a suitable indicator of the transient performance of a power network, as it relates directly to the control objective of frequency synchronization. A large synchronization norm~\eqref{eq:performance_metric_scalar} indicates lack of frequency coherence, i.e., non-fulfillment of the control objective. 
 Defining a corresponding metric based on phase deviations, as suggested in previous work \cite{Tegling2015Price,Poolla2016ACC, Grunberg2016}, does not capture this notion of frequency coherence.


\section{Evaluating performance }
\label{sec:perfeval}
In this section, we compute the performance metric \eqref{eq:performance_metric_scalar} for the closed-loop power system controlled with, respectively, a droop controller, a CAPI controller, and a DAPI controller. We first make the following assumptions that will be assumed to hold henceforth, in order to obtain closed-form expressions for the performance metric. 
\begin{assumption}[Uniform system parameters]
\label{ass:uniform_gains}
The inertia constants $m_i$, the damping constants $d_i$, as well as the controller gains $q_i$ are uniform across the network and given by $m$, $d$ and $q$, respectively. 
\end{assumption}

\begin{assumption}[Communication topology]
\label{ass:communication_graph}
The communication graph used in the DAPI controller resembles that of the power network. That is, $c_{ij} = \gamma k_{ij}$ for all $(i,j) \in \mathcal{E} = \mathcal{E}^C$, or equivalently $\mathcal{L}_c = \gamma \mathcal{L}_k$. The parameter $\gamma$ can be thought of as a communication gain that governs the speed of the distributed averaging filter of the DAPI controller. 
\end{assumption}
%

The following theorem summarizes the main result of this section.
\begin{theorem}[Performance calculation]
\label{th:cost_dapi_droop_capi}
Let Assumptions~\ref{ass:uniform_gains} and \ref{ass:communication_graph} hold. 
The performance of the droop-controlled power system \eqref{eq:swing_vector_2} and the CAPI controlled power system \eqref{eq:CAPI_vector} is equal, and given by
\begin{align}
\label{eq:performance_droop}
\boxed{
\norm{\mathcal{S}_\text{droop}}_\text{sync}^2 = \norm{\mathcal{S}_\text{CAPI}}_\text{sync}^2 = \frac{1}{2n} \sum_{i=2}^n \frac{1}{\lambda_i d}. }
\end{align}
The performance of the DAPI controlled power system \eqref{eq:DAPI_vector} is given by
\begin{align}
\label{eq:performance_DAPI}
\boxed{
\norm{\mathcal{S}_\text{DAPI}}_\text{sync}^2 = \frac{1}{2n} \sum_{i=2}^n \frac{1}{\lambda_i d + \frac{dq + \gamma \lambda_i m }{\gamma d q + q^2 + \gamma^2 \lambda_i m} }.
}
\end{align}
\end{theorem}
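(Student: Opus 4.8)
The plan is to exploit Assumptions~\ref{ass:uniform_gains} and~\ref{ass:communication_graph} to simultaneously diagonalize all the closed-loop matrices in the orthonormal eigenbasis of $\mathcal{L}_k$, reducing each system to a family of decoupled single-input single-output modes, and then to evaluate the resulting scalar integrals by recognizing that the step response of the frequency coincides with the impulse response of an associated transfer function. Write $\mathcal{L}_k = V \Lambda V^T$ with $V$ orthogonal, $\Lambda = \diag(\lambda_1,\dots,\lambda_n)$, and $\lambda_1 = 0$ with eigenvector $v_1 = \tfrac{1}{\sqrt n} 1_n$. Changing coordinates to $\tilde\omega = V^T\omega$, $\tilde\delta = V^T\delta$, $\tilde z = V^T z$, $\tilde P = V^T P^m$, orthogonality of $V$ gives $\tilde P \sim \mathcal{N}(0_n, I_n)$, and since $v_1$ spans exactly the subspace annihilated by $I_n - J_n$, the output obeys $y^T y = \tfrac{1}{n}\sum_{i=2}^n \tilde\omega_i^2$. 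Under Assumption~\ref{ass:uniform_gains} the matrices $M,D,Q$ are scalar multiples of $I_n$, and under Assumption~\ref{ass:communication_graph} $\mathcal{L}_c=\gamma\mathcal{L}_k$ shares the eigenbasis $V$, so every closed loop decouples mode by mode. By linearity $\tilde\omega_i(t)=\tilde P_i\,w_i(t)$ with $w_i$ the unit-step response of the $i$-th modal frequency, and since $\E\{\tilde P_i^2\}=1$, the metric reduces to $\norm{\mathcal{S}}_\text{sync}^2 = \tfrac{1}{n}\sum_{i=2}^n \int_0^\infty w_i^2(t)\,\mathrm{d}t$.

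The central computation is then $\int_0^\infty w_i^2\,\mathrm{d}t$ for a single mode. In every case the transfer function from $\tilde P_i$ to $\tilde\omega_i$ has a zero at the origin, so multiplying by the step transform $1/s$ yields a strictly proper stable transfer function $H_i(s)$; by Parseval's theorem $\int_0^\infty w_i^2\,\mathrm{d}t = \norm{H_i}_{\mathcal{H}_2}^2$. For droop, $H_i(s)=1/(ms^2+ds+\lambda_i)$, whose $\mathcal{H}_2$ norm squared is the standard second-order value $1/(2\lambda_i d)$, giving~\eqref{eq:performance_droop}. For CAPI I would observe that the secondary input $-z\,1_n$ lies entirely in the $v_1$ direction and the measurement $\tfrac1n 1_n^T\omega$ couples only to mode~$1$; hence the modes $i\ge 2$ obey exactly the droop dynamics and their integrals are unchanged, which establishes $\norm{\mathcal{S}_\text{CAPI}}_\text{sync}^2 = \norm{\mathcal{S}_\text{droop}}_\text{sync}^2$.

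The main effort is the DAPI case, where each mode is third order:
\begin{align*}
H_i(s) = \frac{qs + \gamma\lambda_i}{(ms^2 + ds + \lambda_i)(qs + \gamma\lambda_i) + s}.
\end{align*}
I would expand the denominator as $a_3 s^3 + a_2 s^2 + a_1 s + a_0$ with $a_3 = mq$, $a_2 = m\gamma\lambda_i + dq$, $a_1 = d\gamma\lambda_i + q\lambda_i + 1$, $a_0 = \gamma\lambda_i^2$, and apply the classical third-order $\mathcal{H}_2$-norm table formula (equivalently, solve the $3\times 3$ Lyapunov equation for the controllability Gramian). The expected obstacle is purely algebraic: verifying that the resulting rational expression collapses to~\eqref{eq:performance_DAPI}. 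Writing $C := \gamma dq + q^2 + \gamma^2\lambda_i m$, this reduces to two identities that I would check by direct expansion, namely that the numerator equals $\gamma\lambda_i^2\,C$ and that the Hurwitz combination satisfies $a_1 a_2 - a_0 a_3 = \lambda_i d\,C + (dq + \gamma\lambda_i m)$. After these the common factor $\gamma\lambda_i^2$ cancels and the per-mode integral becomes $\tfrac12\big(\lambda_i d + (dq + \gamma\lambda_i m)/C\big)^{-1}$. Summing over $i\ge 2$ and dividing by $n$ then yields~\eqref{eq:performance_DAPI}.

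The conceptual content lies entirely in the first two paragraphs, the decoupling and the step-to-impulse reduction; the third-order norm identity in the last step is a routine but tedious verification, and it is the only place where genuine calculation is required.
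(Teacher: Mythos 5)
Your proposal is correct and follows essentially the same route as the paper: diagonalize in the eigenbasis of $\mathcal{L}_k$ (which simultaneously diagonalizes $M$, $D$, $Q$ and $\mathcal{L}_c$ under Assumptions~\ref{ass:uniform_gains} and~\ref{ass:communication_graph}), observe that mode~$1$ is annihilated by the output and that CAPI only acts on that mode, and reduce each remaining modal step response to an $\mathcal{H}_2$-norm computation. The only (immaterial) difference is that the paper converts the step input into an initial-condition problem by translating to the new equilibrium and solving a Lyapunov equation, whereas you fold the $1/s$ into the transfer function and invoke Parseval with the second- and third-order $\mathcal{H}_2$ table formulas; your explicit identities $b_1^2 a_0 + b_0^2 a_2 = \gamma\lambda_i^2 C$ and $a_1 a_2 - a_0 a_3 = \lambda_i d\, C + (dq+\gamma\lambda_i m)$ check out and supply the DAPI algebra the paper leaves as ``analogous calculations.''
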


\begin{proof} 
Consider first the droop controlled power system \eqref{eq:swing_vector_2} with load input $P^m$ and output given by \eqref{eq:generic_output}. Similar to~\cite{Bamieh2013Price}, we begin by performing the state transformations: $\bar{\delta} = U^* \delta$, $\bar{\omega} = U^* \omega$,
where $U$ is the unitary matrix that diagonalizes $\mathcal{L}_k$, i.e., $U^* \mathcal{L}_k U = \Lambda$, in which $\Lambda = \diag(\lambda_1, \dots, \lambda_n)$ is a diagonal matrix of the eigenvalues of $\mathcal{L}_k$. Let the first column of $U$, $\mathbf{u}_1$, be the eigenvector corresponding to $\lambda_1 = 0$, so $\mathbf{u}_1 = \frac{1}{\sqrt{n}}1_n$. It is easily verified that $\norm{\cdot}_\text{sync}$ is invariant under this unitary transformation. In the new coordinates and by Assumption~\ref{ass:uniform_gains}, the dynamics of the droop-controlled power system become
\begin{equation}
\begin{aligned}
\begin{bmatrix}
\dot{\bar{\delta}} \\ m \dot{\bar{\omega}}
\end{bmatrix} &= \begin{bmatrix}
\phantom{-} 0_{n\times n} &\phantom{-}  b I_{n} \\
- \Lambda & - d I_n
\end{bmatrix} 
\begin{bmatrix}
\bar{\delta} \\ \bar{\omega}
\end{bmatrix} +
\begin{bmatrix}
0_{n} \\  I_n
\end{bmatrix} \bar{P}^m \\
\bar{y} &= 
\frac{1}{\sqrt{n}}
\begin{bmatrix}
0 & 0_{n-1}^T\\
0_{n-1} & I_{n-1}
\end{bmatrix}
 \bar{\omega}
 \end{aligned}
\label{eq:swing_vector_2_bar}
\tag{$\bar{\mathcal{S}}_\text{droop}$}
\end{equation}
where $\bar{P}^m = U^*P_m$. Since $\E \{ \bar{P}^m \} = U^* \E \{  P^m\} = 0_n$ and $\E \{ \bar{P}^{m*} \bar{P}^m \} = \E \{ P^{m*}UU^*P^m \} = \E \{ P^{m*} P^m \} = I_n$, we conclude that $\bar{P}^m\sim \mathcal{N}(0_n, I_n)$. 

The system \eqref{eq:swing_vector_2_bar} consists of $n$ decoupled subsystems, meaning that $||\mathcal{S}_\text{droop}||^2_\text{sync} =||\bar{\mathcal{S}}_\text{droop}||^2_\text{sync} =  \sum_{i=1}^n ||\bar{\mathcal{S}}^i_\text{droop}||^2_{\text{sync}}$, where ${\bar{\mathcal{S}}^1_\text{droop}}$ is given by
\begin{equation}
\begin{bmatrix}
\dot{\bar{\delta}}_1 \\ \dot{\bar{\omega}}_1
\end{bmatrix}
= \underbrace{\begin{bmatrix}
0 & 1 \\
0& -\frac{d}{m} 
\end{bmatrix}}_{\triangleq \bar{A}^1_\text{droop}}
\begin{bmatrix}
{\bar{\delta}}_1 \\ {\bar{\omega}}_1 
\end{bmatrix}
+ \begin{bmatrix}
0 \\ \frac 1m
\end{bmatrix} \bar{P}^m_1 ,~~\bar{y}_1 = 0, 
\label{eq:droop_bar_1}
\tag{$\bar{\mathcal{S}}_\text{droop}^1$}
\end{equation}
and
\begin{equation}
\begin{aligned}
\begin{bmatrix}
\dot{\bar{\delta}}_i \\ \dot{\bar{\omega}}_i
\end{bmatrix}
&= \underbrace{\begin{bmatrix}
\phantom{-} 0 & 1 \\
-\frac{\lambda_i}{m} & -\frac{d}{m} 
\end{bmatrix}}_{\triangleq \bar{A}^i_\text{droop}}
\begin{bmatrix}
{\bar{\delta}}_i \\ {\bar{\omega}}_i
\end{bmatrix}
+ \begin{bmatrix}
0 \\ \frac 1m
\end{bmatrix} \bar{P}^m_i  \\
\bar{y}_i &= 
\underbrace{\frac{1}{\sqrt{n}}\begin{bmatrix}
0 & 1
\end{bmatrix}}_{\triangleq \bar{C}^i_\text{droop}}
\begin{bmatrix}
\bar{\delta}_i \\ \bar{\omega}_i
\end{bmatrix}, 
\end{aligned}
\label{eq:droop_bar_i}
\tag{$\bar{\mathcal{S}}_\text{droop}^i$}
\end{equation}
for $i\ge 2$. Since $\bar{y}_1 \equiv 0$, we have that $||\bar{\mathcal{S}}_\text{droop}||^2_\text{sync} =  \sum_{i=2}^n ||\bar{\mathcal{S}}^i_\text{droop}||^2_{\text{sync}}$. 
The equilibrium of \eqref{eq:droop_bar_i} is verified to be 
\begin{align}
\label{eq:droop_i_equilibrium}
\begin{bmatrix}
{\bar{\delta}}_i^0 \\ {\bar{\omega}}_i^0
\end{bmatrix}
&=
\begin{bmatrix}
\frac{1}{\lambda_i} \\ 0
\end{bmatrix}
\bar{P}^m_i \triangleq \bar{B}^i_\text{droop} \bar{P}^m_i.
\end{align}
It follows that the synchronization norm $||\bar{\mathcal{S}}^i_\text{droop}||^2_{\text{sync}}$ can be equivalently obtained by instead considering the dynamics in the translated states
\begin{align*}
\begin{bmatrix}
{\bar{\delta}}_i' \\ {\bar{\omega}}_i'
\end{bmatrix}
&=
\begin{bmatrix}
{\bar{\delta}}_i \\ {\bar{\omega}}_i
\end{bmatrix}
- 
\begin{bmatrix}
{\bar{\delta}}_i^0 \\ {\bar{\omega}}_i^0
\end{bmatrix},
\end{align*}
with initial condition given by \eqref{eq:droop_i_equilibrium}. We recognize that $||\bar{\mathcal{S}}^i_\text{droop}||^2_{\text{sync}}$ is then given by the squared $\mathcal{H}_2$-norm of the translated system, whose transfer function is $\bar{C}^i_\text{droop} (sI_n - \bar{A}^i_\text{droop})^{-1} \bar{B}^i_\text{droop}$. This $\mathcal{H}_2$-norm can be calculated by, e.g., solving the Lyapunov equation for $P_i$
\begin{align*}
\bar{A}^{iT}_\text{droop} P_i + P_i \bar{A}^{i}_\text{droop}= - \bar{C}^{iT}_\text{droop} \bar{C}^i_\text{droop},
\end{align*}
after which the norm is given as $||\mathcal{S}^i_\text{droop}||^2_{\text{sync}} = \text{tr}(\bar{B}^{iT}_\text{droop} P_i \bar{B}^i_\text{droop}) $. In our case, $||\bar{\mathcal{S}}^i_\text{droop}||^2_{\text{sync}} = \frac{1}{2n\lambda_i d}$, and summing over $i=2, \dots, n$ yields the expression~\eqref{eq:performance_droop}. The synchronization norm~\eqref{eq:performance_DAPI} for the DAPI controlled system~\eqref{eq:DAPI_vector} is obtained through analogous calculations. 

Now consider the CAPI controlled system \eqref{eq:CAPI_vector}. By similar calculations as before, we transform the state to obtain $n$ decoupled subsystems. These are given by
\begin{equation}
\begin{bmatrix}
\dot{\bar{\delta}}_1 \\ \dot{\bar{\omega}}_1 \\ \dot{z}
\end{bmatrix} \!
= \! \underbrace{\begin{bmatrix}
0 & 1 & \phantom{-}0 \\
0& \frac{d}{m} & -\frac{1}{m} \\
0 & \frac 1q & \phantom{-}0
\end{bmatrix}}_{\triangleq \bar{A}_\text{CAPI}^1} \!\!
\begin{bmatrix}
{\bar{\delta}}_1 \\ {\bar{\omega}}_1 \\ z
\end{bmatrix}
\! + \! \begin{bmatrix}
0 \\ \frac 1m \\ 0
\end{bmatrix} \! \bar{P}^m_1,~~  \bar{y}_1 = 0,
 \label{eq:CAPI_bar_1}
\tag{$\bar{\mathcal{S}}_\text{CAPI}^1$}
\end{equation}
since $\mathbf{u}_1$ is parallel to $1_n$, and by
\begin{equation} 
\begin{bmatrix}
\dot{\bar{\delta}}_i \\ \dot{\bar{\omega}}_i
\end{bmatrix} \!= \! \underbrace{\begin{bmatrix}
\phantom{-} 0 & 1 \\
-\frac{\lambda_i}{m} & \frac{d}{m} 
\end{bmatrix}}_{\triangleq \bar{A}_\text{CAPI}^i}  \!\!
\begin{bmatrix}
{\bar{\delta}}_i \\ {\bar{\omega}}_i
\end{bmatrix}
\! +\! \begin{bmatrix}
0 \\ \frac 1m
\end{bmatrix} \! \bar{P}^m_i,~~\bar{y}_i = 
\underbrace{\frac{1}{\sqrt{n}} \! \begin{bmatrix}
0 & 1
\end{bmatrix}}_{\triangleq \bar{C}_\text{CAPI}^i} \!\!
\begin{bmatrix}
\bar{\delta}_i \\ \bar{\omega}_i
\end{bmatrix}, 
\label{eq:CAPI_bar_i}
\tag{$\bar{\mathcal{S}}_\text{CAPI}^i$}
\end{equation}
for $i\ge 2$. Since \eqref{eq:droop_bar_1} is unobservable and the systems \eqref{eq:droop_bar_i} and \eqref{eq:CAPI_bar_i} are identical for $i\ge 2$, it follows that $\norm{\mathcal{S}_\text{droop}}_\text{sync}^2 = \norm{\mathcal{S}_\text{CAPI}}_\text{sync}^2$. 
\end{proof}

 It is clear from Theorem~\ref{th:cost_dapi_droop_capi} that $\norm{\mathcal{S}_\text{DAPI}}^2_{\text{sync}} < \norm{\mathcal{S}_\text{droop}}^2_{\text{sync}} $ if the damping $d$ is the same for both systems. We conclude that the synchronization performance of the DAPI controller is always superior to that of the droop and CAPI controllers. Perhaps surprisingly, the CAPI controller does not offer a benefit compared to the droop controller in terms of the performance metric~\eqref{eq:performance_metric_scalar}. This can be understood by noting that CAPI only introduces integral feedback control on the average state, and affects all buses equally. It therefore does not affect the synchronization itself, but rather shifts the equilibrium point. 

\subsection{Implications for large-scale networks}
The results in Theorem~\ref{th:cost_dapi_droop_capi} have implications for how well the control laws scale to large networks. In particular, it is easily shown that $||\mathcal{S}^i_\text{DAPI}||^2_{\text{sync}} < \frac{\gamma d +q}{2d}$ for any underlying network graph $\mathcal{G}$. That is, for any finite $\gamma$, the synchronization norm for DAPI remains bounded, even as the number of generators $n$ increases. For CAPI and droop control, on the other hand, $||\mathcal{S}_\text{droop}||_\text{sync}^2 = ||\mathcal{S}_\text{CAPI}||_\text{sync}^2 \sim \frac{1}{n}\sum_{i = 2}^n \frac{1}{\lambda_i}$, an expression that in many cases scales unfavorably in $n$. 

The expression~$\sum_{i = 2}^n \frac{1}{\lambda_i}$ appears often in the coherency literature, and is related to the concept of total effective resistance in resistor networks. As such, a number of recent studies have focused on characterizing bounds and scalings for this expression in various types of networks, see~\cite{bamieh2012coherence, SiamiMotee2016, Barooah}. While the interested reader is referred to these works for details, we note here that their results imply that $||\mathcal{S}_\text{droop}||_\text{sync}^2 = ||\mathcal{S}_\text{CAPI}||_\text{sync}^2$ grow unboundedly with the network size~$n$ for certain sparse network graphs. Examples of such graphs are tree graphs or graphs that can be embedded in two-dimensional lattices~\cite{SiamiMotee2016,Barooah}.

The unfavorable scaling of the droop and CAPI controllers' performance implies an increasing lack of coherence as the number of generators increases. Therefore, these control laws are not as scalable to large networks as DAPI control, unless the underlying graphs are densely interconnected.


\section{Optimizing DAPI performance}
\label{sec:optimal_DAPI}
In the previous section, we showed that the DAPI controller outperforms the droop and the CAPI controllers in terms of transient synchronization performance. We now focus on optimizing the performance of the DAPI controller by tuning its parameters. By Assumptions~\ref{ass:uniform_gains} and \ref{ass:communication_graph}, this reduces to optimizing $\norm{\mathcal{S}_\text{DAPI}}^2_{\text{sync}} $ over (positive) $\gamma$ and $q$, assuming that the inertia $m$ and the damping $d$ are fixed. 

We begin this discussion by considering the function 
\begin{align}
\label{eq:fdef}
f_i(\gamma,q) \triangleq \frac{dq + m  \lambda_i \gamma}{ d q \gamma + q^2 + m \lambda_i \gamma ^2 }
\end{align}
from the denominator of~\eqref{eq:performance_DAPI}. We note that $\lim_{q\rightarrow 0} f_i(0,q) = \infty$, implying that for a communication gain $\gamma=0$ and an inverse integral gain $q \rightarrow 0$, 
\[\norm{\mathcal{S}_\text{DAPI}}^2_{\text{sync}} = 0.\]
Similarly $\lim_{\gamma \rightarrow \infty} f_i(\gamma,q) = 0$. Thus for any fixed $q$ and for $\gamma \rightarrow \infty$,
\begin{align*}
\norm{\mathcal{S}_\text{DAPI}}^2_{\text{sync}} = \norm{\mathcal{S}_\text{CAPI}}^2_{\text{sync}} = \sum_{i=2}^n \frac{1}{2\lambda_i d},
\end{align*}
that is, the performance of the DAPI controller approaches that of the CAPI (or, equivalently, droop) controller. This is expected, as increasing the communication gain $\gamma$ in the DAPI controller to infinity implies arbitrarily fast distributed averaging, ultimately resembling centralized averaging like in CAPI control. 
On the other hand, setting $q=0$, which can achieve zero synchronization cost, is not practically feasible as it would require infinite control effort. A question of practical relevance is therefore that of choosing the optimal $\gamma$ given a fixed $q$. To address that question, we begin by focusing on a special case.


\subsection{Special case: complete graphs}
\label{subsec:optimality_complete}
We first consider the case in which the power network can be modeled as a complete graph. We further assume that the edge weights $k_{ij}$ for all $(i,j) \in \mathcal{E} $ are uniform and equal to $b$. For this graph, the Laplacian eigenvalues satisfy $\lambda_2 = \dots = \lambda_n = {b n} $. Therefore, minimizing $\norm{\mathcal{S}_\text{DAPI}}^2_{\text{sync}}$ is equivalent to maximizing $f_i(\gamma,q)$ from~\eqref{eq:fdef}, with $\lambda_i =b n$. In this case, we can derive a closed-form expression for the optimal value of $\gamma$, denoted $\gamma^\star$, depending on the network parameters and the gain $q$:

\begin{theorem}[Optimizing performance in complete graphs]
\label{th_optimal_gamma_k}
Let Assumptions~\ref{ass:uniform_gains} and \ref{ass:communication_graph} hold. Assume furthermore that the power network is a complete graph and that the edge weights $k_{ij}$ are uniform and equal to $b$. Let the integral controller gain $q$ be fixed. Then, if $d \ge \sqrt{mb n}$, the value of $\gamma$ that minimizes $\norm{\mathcal{S}_\text{DAPI}}^2_{\text{sync}}$ is $\gamma^\star=0$. Otherwise the optimal choice of $\gamma$ is
\[\gamma^\star = \frac{\sqrt{mb n}q -dq}{mb n}.\]
%
\end{theorem}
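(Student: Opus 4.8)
The plan is to reduce the problem to an unconstrained one-dimensional maximization and then locate its optimum while respecting the constraint $\gamma \ge 0$. Since $\lambda_i d$ does not depend on $\gamma$, minimizing $\norm{\mathcal{S}_\text{DAPI}}^2_{\text{sync}}$ term-by-term over $\gamma$ is equivalent to maximizing $f_i(\gamma,q)$ from~\eqref{eq:fdef}. For a complete graph with uniform weights we have $\lambda_2 = \dots = \lambda_n = bn$, so every nontrivial term is governed by the same function $f(\gamma)$, obtained from $f_i(\gamma,q)$ in~\eqref{eq:fdef} by setting $\lambda_i = bn$. It therefore suffices to maximize this single function over $\gamma \ge 0$ with $q$ fixed.

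First I would differentiate $f$ with respect to $\gamma$. Writing the numerator and denominator as $N(\gamma) = dq + mbn\,\gamma$ and $D(\gamma) = mbn\,\gamma^2 + dq\,\gamma + q^2$, the sign of $f'(\gamma)$ is governed by the quadratic $N'D - N D' = -(mbn)^2\gamma^2 - 2dq\,mbn\,\gamma + q^2(mbn - d^2)$, which is a downward-opening parabola in $\gamma$. Setting it to zero, the discriminant collapses to the perfect square $4(mbn)^3 q^2$ (the $d^2q^2$-terms cancel), yielding the two roots $\gamma = (\pm\sqrt{mbn}\,q - dq)/(mbn)$. The positive candidate is exactly the claimed $\gamma^\star = (\sqrt{mbn}\,q - dq)/(mbn)$.

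Next I would settle feasibility and optimality. The candidate $\gamma^\star$ is strictly positive precisely when $\sqrt{mbn}\,q > dq$, i.e. when $d < \sqrt{mbn}$; in this regime the parabola $N'D - N D'$ is positive at $\gamma = 0$ (its value there is $q^2(mbn - d^2) > 0$) and changes sign exactly once on $(0,\infty)$, so $f$ increases and then decreases and $\gamma^\star$ is the unique interior maximizer. When instead $d \ge \sqrt{mbn}$, the value of $N'D - N D'$ at $\gamma = 0$ equals $q^2(mbn - d^2) \le 0$, and since the parabola opens downward with its larger root at $\gamma^\star \le 0$, we get $f'(\gamma) \le 0$ for every $\gamma \ge 0$. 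Hence $f$ is nonincreasing on $[0,\infty)$ and the constrained maximum is attained at the boundary $\gamma^\star = 0$, giving the first case of the theorem.

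The differentiation and the simplification of the discriminant are routine; the only point requiring care is the constrained optimization, namely recognizing that once the interior critical point becomes infeasible ($\gamma^\star \le 0$) the optimum migrates to the boundary $\gamma = 0$, and confirming via the sign of $f'$ on $[0,\infty)$ that this boundary point is genuinely the maximizer. I expect this boundary analysis, together with correctly matching the threshold $d = \sqrt{mbn}$ to both cases, to be the main (though modest) obstacle.
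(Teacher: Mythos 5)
Your proposal is correct and follows essentially the same route as the paper: reduce to maximizing $f_i(\gamma,q)$ with $\lambda_i = bn$ over $\gamma \ge 0$, find the critical points by differentiation, and handle the $d \ge \sqrt{mbn}$ case by showing the optimum moves to the boundary $\gamma=0$. The only cosmetic difference is that the paper differentiates the reciprocal $g_i = 1/f_i = \gamma + q^2/(dq+m\lambda_i\gamma)$, which avoids the quotient-rule discriminant computation, while your direct quotient-rule argument (and your more explicit sign analysis of $f'$ on $[0,\infty)$) reaches the same critical points and conclusion.
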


\begin{proof}
We rewrite $f_i(\gamma,q) $ as
\begin{align*}
f_i(\gamma,q) = \frac{1}{\gamma + \frac{q^2}{dq+m\lambda_i \gamma}},
\end{align*}
and note that maximizing $f_i(\gamma,q) $ is equivalent to minimizing 
\begin{align*}
g_i(\gamma,q) \triangleq \gamma + \frac{q^2}{dq+m\lambda_i \gamma}.
\end{align*}

Differentiating $g_i(\gamma,q) $ with respect to $\gamma$ and setting the derivative equal to zero yields
\(
 \frac{\partial g_i(\gamma,q)}{\partial \gamma}  = 
 1 - \frac{m\lambda_i q^2}{(dq+m\lambda_i \gamma)^2} =0,
\)
which implies
\begin{align*}
\gamma^\star = \frac{-dq \pm \sqrt{m\lambda_i} q}{m\lambda_i}.
\end{align*}
From the above equation we conclude that there is a positive extreme point $\gamma^\star$ if and only if $d\le \sqrt{m\lambda_i}$. Furthermore, $g_i(0,q) = \frac{q}{d}$ and $\lim_{\gamma \rightarrow \infty} g_i(\gamma,q) = \infty$, so clearly the positive extreme point is a local minimum. It follows that the positive extreme point minimizes $g_i(\gamma,q)$ if $d\le \sqrt{m\lambda_i}$, and otherwise $g_i(\gamma,q)$ is minimized by the lower end point of the interval, namely $\gamma = 0$. Recalling that $\lambda_i= b n$ for $i\ge 2$ concludes the proof. 
\end{proof}


Theorem \ref{th_optimal_gamma_k} answers the question of when DAPI control has a theoretical performance superior to DePI control.
If the damping is sufficiently large, the optimal value of $\gamma$ is zero and thus the DePI controller minimizes the synchronization norm. If, on the other hand, the damping is sufficiently small, the optimal value of $\gamma$ is nonzero. 

We also note that the optimal $\gamma^\star$ increases with increasing $q$ and with decreasing $d$, that is, with decreasing integral gain or damping. An intuitive explanation for this can be given as follows. A smaller damping or integral gain will cause disturbances to spread across the network rather than to be suppressed locally. A DAPI controller with large communication gain $\gamma$ anticipates this spreading by faster distributing the integral control action across the generators. 

Further, it can be shown that, given a fixed $d$ and $q$, the largest possible value for $\gamma^\star$ is attained when $m b n = 4d^2$, and is $\gamma^\star_{\mathrm{max}} = \frac{q}{2\sqrt{m b n}} = \frac{q}{4d}$. Therefore, if the number of generators $n$ grows large, or if $m$ or $b$ are large, $\gamma^\star$ will approach zero. On the other hand, when $n$, $b$ and $m$ are instead small, so that $\sqrt{mb n}\le d$,  $\gamma^\star$ is also zero, as seen from Theorem~\ref{th_optimal_gamma_k}. In conclusion, the averaging taking place within the DAPI controller is of most importance for medium-sized power grids with moderate susceptances and inertia.
%

We note that a result similar to that of Theorem~\ref{th_optimal_gamma_k} was proposed in~\cite{Tegling2016_ACC} for a different performance metric and input scenario. The optimal value for $\gamma$ in the complete graph case is the same in both cases. This shows that choosing the parameter $\gamma$ as $\gamma^\star$ is relevant with respect to several performance metrics.

It is, however, important to remember that the results of Theorem~\ref{th_optimal_gamma_k} do not take the control cost into account. In particular, the DAPI controller is, thanks to its power sharing properties, known to minimize the static generation cost, whereas the DePI controller does not possess this desirable property. The optimality of the DePI controller with respect to minimizing the synchronization norm should thus be seen in light of the DAPI controller's ability to minimize generation costs.


\begin{figure*}[t!]
	\centering
\begin{subfigure}[b]{0.32\textwidth}
\centering
\includegraphics[scale=1]{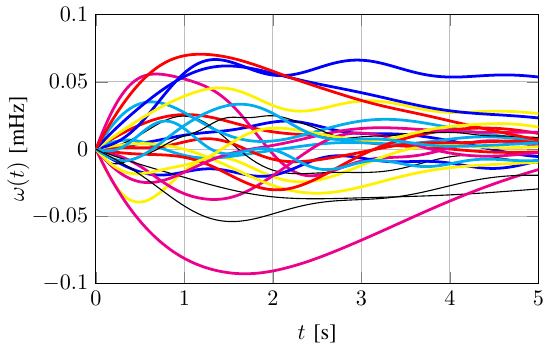}
	\caption{Droop controller.}
	\label{subfig:frequency_nordic_droop}
\end{subfigure}
\;
\begin{subfigure}[b]{0.32\textwidth}
\includegraphics[scale=1]{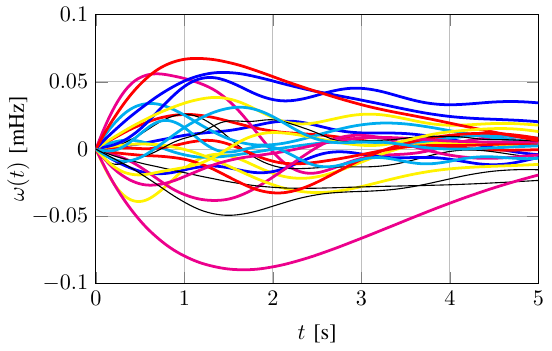}
	\caption{DAPI controller, $\gamma=1$.}
	\label{frequency_nordic_DAPI_large_gamma}
\end{subfigure}
\;
\begin{subfigure}[b]{0.32\textwidth}
\includegraphics[scale=1]{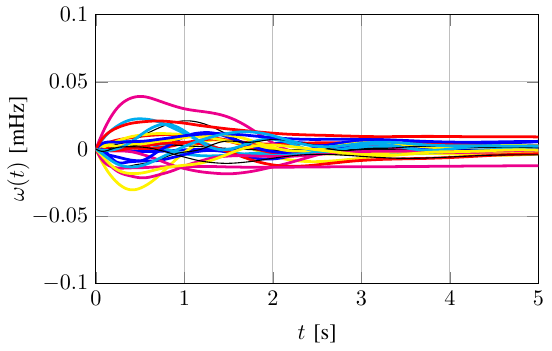}
	\caption{DAPI controller, $\gamma=0.01$.}
	\label{frequency_nordic_DAPI}
\end{subfigure}
\caption{Frequency deviations of a subset of the generators in simulation of the Nordic power grid after a step increase in load at time $t=0$. The figures show the frequencies under (a) droop control, and under DAPI control with (b) $\gamma=1$ and (c) $\gamma=0.01$. DAPI control significantly improves transient performance, provided the parameter $\gamma$ is sufficiently small. }
	\label{fig:frequency_Nordic}
\end{figure*}

\begin{figure}[h]
\centering
\includegraphics[scale=1]{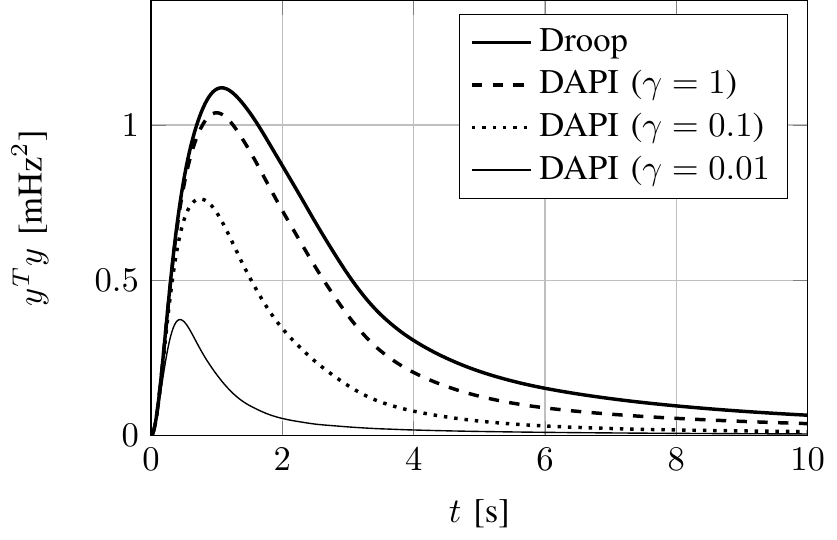}
	
\caption{Squared control error $y^Ty$, where $y(t)= \frac{1}{\sqrt{n}}(I_n-J_n)\omega(t)$, corresponding to the simulations in Fig.~\ref{fig:cost_Nordic_DAPI_droop}. 
The synchronization norm~\eqref{eq:performance_metric_scalar} here corresponds to the area under the curve.}
	\label{fig:cost_Nordic_DAPI_droop}
\end{figure}

\subsection{General connected graphs with arbitrary susceptances}
\label{subsec:optimality_general_graphs}
In this section, we generalize the results in Section~\ref{subsec:optimality_complete} to power networks with general underlying graph structures. This implies that the eigenvalues $\lambda_i$ for $i\ge 2$ will in general not be equal, and minimizing $\norm{\mathcal{S}_\text{DAPI}}^2_{\text{sync}} $ is not equivalent to maximizing
$f_i(\gamma,q)$. The following theorem summarizes when there is a positive $\gamma^\star$, when it is zero, and when the optimum must be evaluated on a case-by-case basis.  

\begin{theorem}[Optimizing performance in general graphs]
\label{th_optimal_gamma_k_general}
Let Assumptions~\ref{ass:uniform_gains} and \ref{ass:communication_graph} hold. 
The optimal choices of $\gamma$ are as follows: \\
If $d \ge \sqrt{m\lambda_i}$ for all $i\ge 2,$ then \[ \gamma^\star=0, \] 
if $d \ge \sqrt{m\lambda_i}$ for $2 \le i \le l < n, $ then \[0 \le \gamma^\star  
\le \max_{i} \frac{ \sqrt{m\lambda_i } q -dq}{m\lambda_i},\]
and if $d < \sqrt{m\lambda_i}$ for all $i\ge 2$, then 
\[\min_{i} \frac{ \sqrt{m\lambda_i } q -dq}{m\lambda_i}  \le \gamma^\star \le \max_{i} \frac{ \sqrt{m\lambda_i } q -dq}{m\lambda_i}.\]
\end{theorem}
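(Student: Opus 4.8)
The plan is to reduce the joint optimization over the single scalar $\gamma$ to the per-mode analysis already carried out in the proof of Theorem~\ref{th_optimal_gamma_k}, and then to localize the minimizer by a co-monotonicity argument. First I would observe that minimizing $\norm{\mathcal{S}_\text{DAPI}}^2_{\text{sync}}$ over $\gamma\ge 0$ is the same as minimizing
\[
H(\gamma)\triangleq \frac{1}{2n}\sum_{i=2}^n h_i(\gamma),\qquad h_i(\gamma)\triangleq \frac{1}{\lambda_i d + f_i(\gamma,q)},
\]
and that each $h_i$ is a strictly decreasing function of $f_i$. From the computation in the proof of Theorem~\ref{th_optimal_gamma_k}, $f_i = 1/g_i$ with $g_i(\gamma,q)=\gamma + \frac{q^2}{dq+m\lambda_i\gamma}$, whose derivative $1-\frac{m\lambda_i q^2}{(dq+m\lambda_i\gamma)^2}$ is negative for small $\gamma$ and positive for large $\gamma$ exactly when the positive root $(\sqrt{m\lambda_i}\,q - dq)/(m\lambda_i)$ exists, i.e. when $d<\sqrt{m\lambda_i}$. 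Setting $\gamma_i^\star \triangleq \max\{0,(\sqrt{m\lambda_i}\,q - dq)/(m\lambda_i)\}$, each $g_i$ is nonincreasing on $[0,\gamma_i^\star]$ and increasing on $[\gamma_i^\star,\infty)$, so each $h_i$ is \emph{unimodal}: decreasing on $[0,\gamma_i^\star]$ and increasing on $[\gamma_i^\star,\infty)$, with unique minimum at $\gamma_i^\star$.

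The key step is to bound the minimizer of the sum by the extremes of the individual minimizers. For $\gamma<\min_i \gamma_i^\star$ every $h_i$ is decreasing, so $H$ is strictly decreasing there; symmetrically, for $\gamma>\max_i\gamma_i^\star$ every $h_i$ is increasing, so $H$ is strictly increasing. Since $H$ is continuous and, as $\gamma\to\infty$, $f_i\to 0$ so that $H$ increases to the finite value $\frac{1}{2n}\sum_{i\ge 2}1/(\lambda_i d)$, a global minimizer exists and must satisfy $\min_i\gamma_i^\star \le \gamma^\star \le \max_i\gamma_i^\star$. The three cases then follow by specializing this enclosure: if $d\ge\sqrt{m\lambda_i}$ for all $i\ge 2$ then every $\gamma_i^\star=0$ and the interval collapses to $\{0\}$; if $d\ge\sqrt{m\lambda_i}$ only for $2\le i\le l$ then $\min_i\gamma_i^\star=0$ while $\max_i\gamma_i^\star = \max_i (\sqrt{m\lambda_i}\,q - dq)/(m\lambda_i)$ (the nonpositive terms for $i\le l$ not affecting the maximum); and if $d<\sqrt{m\lambda_i}$ for all $i\ge 2$ then all $\gamma_i^\star$ are positive and equal the stated expression, giving the two-sided bound.

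I expect the main obstacle to be conceptual rather than computational: a sum of unimodal functions is in general not itself unimodal, and its minimizer genuinely depends on the relative weights $1/(\lambda_i d)$ and on the spread of the $\gamma_i^\star$, so one cannot hope for a closed form for $\gamma^\star$ on general graphs. The correct — and essentially sharpest — statement is therefore the enclosure within $[\min_i\gamma_i^\star,\max_i\gamma_i^\star]$, and the crucial structural fact that makes it work is that \emph{outside} this interval all component derivatives share a common sign, forcing $H$ to be monotone and excluding any minimizer there. The only real computation needed is the sign analysis of $g_i'$, which is already available from Theorem~\ref{th_optimal_gamma_k}; everything else is monotonicity bookkeeping together with a continuity and existence argument in the limit $\gamma\to\infty$.
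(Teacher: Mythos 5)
Your proposal is correct and follows essentially the same route as the paper: the paper likewise reduces to the per-mode functions $g_i$ from Theorem~\ref{th_optimal_gamma_k} and rules out any minimizer outside $[\min_i\gamma_i^\star,\max_i\gamma_i^\star]$ by noting that all derivatives $\partial g_i/\partial\gamma$ share a common sign there (stated as a proof by contradiction). Your write-up is somewhat more complete in that it makes the unimodality of each summand and the existence of a global minimizer explicit, but the underlying argument is identical.
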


\begin{proof}
The case when $\displaystyle d \ge \sqrt{m\lambda_i}, \; i\ge 2$ is trivial since $f_i(\gamma,q)$ is then minimized by $\gamma^\star=0$ for all $i\ge 2$, by the proof of Theorem~\ref{th_optimal_gamma_k}. Now consider the case when $d < \sqrt{m\lambda_i}, \;  i \ge 2$. Assume for the sake of contradiction that $\gamma^\star < \min_{i} \frac{ \sqrt{m\lambda_i } q -dq}{m\lambda_i}$. This would imply that $\frac{\partial g_i(\gamma,q)}{\partial \gamma}\big|_{\gamma = \gamma^\star} < 0$ for $i\ge 2$, and hence $\gamma^\star$ cannot be the minimizer of $\norm{\mathcal{S}_\text{DAPI}}^2_{\text{sync}}$. The remaining cases are proven similarly. 
\end{proof}


\section{Simulations}
\label{sec:Simulations}
We consider a model of the Nordic power transmission network, consisting of $469$ generators. For simplicity we assume uniform generator inertia and damping, specifically $m=1 \times 10^4~\text{kgm}^2$ and $d =1 \times 10^4~\text{kgm}^2/\text{s}$. The admittances of the power transmission lines were assumed to be proportional to their inverse physical length. We simulate the response of the network after a random step disturbance on the load, taken from a normal distribution, with, respectively, the droop controller and the DAPI controller. 
For the DAPI controller, we set $q=2.5\times 10^{-7}$. 

The responses are shown in Fig.~\ref{fig:frequency_Nordic}, where two different choices of $\gamma$ have been used. In Fig.~\ref{fig:cost_Nordic_DAPI_droop}, the corresponding (squared) control error of the droop controlled and the DAPI controlled systems are compared for different choices of $\gamma$. It is clear that a smaller value of $\gamma$ substantially improves performance, as predicted by Theorem~\ref{th_optimal_gamma_k_general}. The same theorem predicts a large $\gamma$ to be the optimal choice as the inertia~$m$ and the gain~$q$ increase. In this case, however, DAPI offers only a marginal performance improvement, as a large $\gamma$ makes DAPI similar to CAPI control. 


\section{Discussion}
\label{sec:disc}
In this paper, we have studied the dynamic performance of synchronizing power networks by means of a coherency measure, which can be evaluated using an $\mathcal{H}_2$-norm framework. We showed that the performance of a power system controlled with DAPI control is always superior to that of a droop controlled as well as a CAPI controlled system for the same system and controller parameters. The performance of the respective controllers also scales differently with network size. Therefore, the advantage of DAPI control has the potential to become increasingly accentuated as the number of generators grows, for example, through large-scale integration of distributed generation sources. This is because the performance metric may grow unboundedly with CAPI or droop control, while it remains bounded with DAPI. We note that this result stands in contrast to previous work that has evaluated performance in terms or transient power losses~\cite{Bamieh2013Price,Tegling2015Price,Tegling2016_ACC}, where the losses, when normalized by the number of generators, always remained bounded. 


It was shown in Section~\ref{sec:optimal_DAPI} that the optimal choice of the communication gain $\gamma$ is zero in many cases, which would result in a decentralized proportional integral (DePI) controller. This, however, proves problematic as constant measurement noise can destabilize DePI controllers, unless phase measurement units are deployed at every node~\cite{andreasson2013lic}. Clearly, a positive $\gamma$, that ensures that the DAPI controller is robustly stable, must be chosen, although it may be suboptimal with respect to the performance metric considered here. A characterization of the performance of the DAPI controller under measurement noise is left as future work. 

\section*{Acknowledgements}
We would like to thank John W. Simpson-Porco for his insightful comments and a number of interesting discussions.

\bibliographystyle{IEEETran}
\bibliography{references}

\begin{thebibliography}{10}
\providecommand{\url}[1]{#1}
\csname url@rmstyle\endcsname
\providecommand{\newblock}{\relax}
\providecommand{\bibinfo}[2]{#2}
\providecommand\BIBentrySTDinterwordspacing{\spaceskip=0pt\relax}
\providecommand\BIBentryALTinterwordstretchfactor{4}
\providecommand\BIBentryALTinterwordspacing{\spaceskip=\fontdimen2\font plus
\BIBentryALTinterwordstretchfactor\fontdimen3\font minus
  \fontdimen4\font\relax}
\providecommand\BIBforeignlanguage[2]{{%
\expandafter\ifx\csname l@#1\endcsname\relax
\typeout{** WARNING: IEEEtran.bst: No hyphenation pattern has been}%
\typeout{** loaded for the language `#1'. Using the pattern for}%
\typeout{** the default language instead.}%
\else
\language=\csname l@#1\endcsname
\fi
#2}}

\bibitem{machowski2008power}
J.~Machowski, J.~Bialek, and J.~Bumby, \emph{Power System Dynamics: Stability
  and Control}.\hskip 1em plus 0.5em minus 0.4em\relax Wiley, 2008.

\bibitem{Zhao2014Design}
C.~Zhao, U.~Topcu, N.~Li, and S.~Low, ``Design and stability of load-side
  primary frequency control in power systems,'' \emph{IEEE Trans. Autom.
  Control}, vol.~59, no.~5, pp. 1177--1189, May 2014.

\bibitem{Mallada2014Optimal}
E.~Mallada, C.~Zhao, and S.~Low, ``Optimal load-side control for frequency
  regulation in smart grids,'' in \emph{52nd Annual Allerton Conference on
  Communication, Control, and Computing}, Sept 2014, pp. 731--738.

\bibitem{Zhang2015realtime}
X.~Zhang and A.~Papachristodoulou, ``A real-time control framework for smart
  power networks,'' \emph{Automatica}, vol.~58, no.~C, pp. 43--50, Aug. 2015.

\bibitem{Shafiee2012}
Q.~Shafiee, J.~C. Vasquez, and J.~M. Guerrero, ``Distributed secondary control
  for islanded microgrids - a networked control systems approach,'' in
  \emph{IECON 2012 - 38th Annual Conference on IEEE Industrial Electronics
  Society}, Oct 2012, pp. 5637--5642.

\bibitem{SimpsonPorco2013synchronization}
J.~W. Simpson-Porco, F.~D{\"o}rfler, and F.~Bullo, ``Synchronization and power
  sharing for droop-controlled inverters in islanded microgrids,''
  \emph{Automatica}, vol.~49, no.~9, pp. 2603 -- 2611, 2013.

\bibitem{Andreasson2013_ecc}
M.~Andreasson, D.~Dimarogonas, K.~H. Johansson, and H.~Sandberg, ``Distributed
  vs. centralized power systems frequency control,'' in \emph{European Control
  Conf.}, July 2013, pp. 3524--3529.

\bibitem{andreasson2014_acc}
M.~Andreasson, D.~V. Dimarogonas, H.~Sandberg, and K.~H. Johansson,
  ``Distributed {PI}-control with applications to power systems frequency
  control,'' in \emph{American Control Conf.}, June 2014, pp. 3183--3188.

\bibitem{Dorflerhierarchy2015}
F.~D{\"o}rfler, J.~Simpson-Porco, and F.~Bullo, ``Breaking the hierarchy:
  Distributed control \& economic optimality in microgrids,'' \emph{IEEE Trans.
  Control Netw. Syst.}, vol.~3, no.~3, pp. 241--253.

\bibitem{Trip2016Internal}
S.~Trip, M.~B{\"u}rger, and C.~D. Persis, ``An internal model approach to
  (optimal) frequency regulation in power grids with time-varying voltages,''
  \emph{Automatica}, vol.~64, pp. 240 -- 253, 2016.

\bibitem{bamieh2012coherence}
B.~Bamieh, M.~R. Jovanovic, P.~Mitra, and S.~Patterson, ``Coherence in
  large-scale networks: Dimension-dependent limitations of local feedback,''
  \emph{IEEE Trans. Autom. Control}, vol.~57, no.~9, pp. 2235--2249, 2012.

\bibitem{SiamiMotee2016}
M.~Siami and N.~Motee, ``Fundamental limits and tradeoffs on disturbance
  propagation in linear dynamical networks,'' \emph{IEEE Trans. Autom.
  Control}, vol.~61, no.~12, pp. 4055--4062, Dec 2016.

\bibitem{Bamieh2013Price}
B.~Bamieh and D.~F. Gayme, ``The price of synchrony: Resistive losses due to
  phase synchronization in power networks,'' in \emph{American Control Conf.},
  June 2013, pp. 5815--5820.

\bibitem{Tegling2015Price}
E.~Tegling, B.~Bamieh, and D.~F. Gayme, ``The price of synchrony: Evaluating
  the resistive losses in synchronizing power networks,'' \emph{IEEE Trans.
  Control Netw. Syst.}, vol.~2, no.~3, pp. 254--266, Sept 2015.

\bibitem{Tegling2016_ACC}
E.~Tegling, M.~Andreasson, J.~W. Simpson-Porco, and H.~Sandberg, ``Improving
  performance of droop-controlled microgrids through distributed
  {PI}-control,'' in \emph{American Control Conf.}, July 2016, pp. 2321--2327.

\bibitem{Grunberg2016}
T.~W. Grunberg and D.~F. Gayme, ``Performance measures for linear oscillator
  networks over arbitrary graphs,'' \emph{IEEE Trans. Control Netw. Syst.},
  2017, to appear.

\bibitem{Poolla2016ACC}
B.~K. Poolla, S.~Bolognani, and F.~Dörfler, ``Placing rotational inertia in
  power grids,'' in \emph{American Control Conf.}, July 2016, pp. 2314--2320.

\bibitem{Simpson2016CDC}
J.~W. Simpson-Porco, B.~K. Poolla, N.~Monshizadeh, and F.~D\"orfler,
  ``Quadratic performance of primal-dual methods with application to secondary
  frequency control of power systems,'' in \emph{IEEE Conf. on Decision and
  Control (CDC)}, Dec 2016, pp. 1840--1845.

\bibitem{Varaiya1985}
P.~Varaiya, F.~Wu, and R.-L. Chen, ``Direct methods for transient stability
  analysis of power systems: Recent results,'' \emph{Proc. of the IEEE},
  vol.~73, no.~12, pp. 1703 -- 1715, Dec. 1985.

\bibitem{Andreasson2014TAC}
M.~Andreasson, D.~Dimarogonas, H.~Sandberg, and K.~Johansson, ``Distributed
  control of networked dynamical systems: Static feedback, integral action and
  consensus,'' \emph{IEEE Trans. Autom. Control}, vol.~59, no.~7, pp.
  1750--1764, July 2014.

\bibitem{DorflerACC2016}
F.~D\"{o}rfler and S.~Grammatico, ``Amidst centralized and distributed
  frequency control in power systems,'' in \emph{American Control Conf.}, July
  2016, pp. 5909--5914.

\bibitem{Andreasson2012}
M.~Andreasson, H.~Sandberg, D.~V. Dimarogonas, and K.~H. Johansson,
  ``Distributed integral action: Stability analysis and frequency control of
  power systems,'' in \emph{IEEE Conf. on Decision and Control}, Dec. 2012.

\bibitem{andreasson2013lic}
M.~Andreasson, ``Control of multi-agent systems with applications to
  distributed frequency control power systems,'' Licentiate Thesis, KTH Royal
  Institute of Technology, 2013.

\bibitem{Barooah}
P.~Barooah and J.~P. Hespanha, ``Estimation on graphs from relative
  measurements,'' \emph{IEEE Control Syst. Mag.}, vol.~27, no.~4, pp. 57--74,
  Aug 2007.

\end{thebibliography}
\end{document}